\documentclass[11pt,reqno,tbtags]{amsart}

\usepackage{amsthm,amssymb,amsfonts,amsmath}
\usepackage{amscd} 
\usepackage{mathrsfs}
\usepackage[numbers, sort&compress]{natbib} 
\usepackage{subfigure, graphicx}
\usepackage{vmargin}
\usepackage{setspace}
\usepackage{paralist}
\usepackage[pagebackref=true]{hyperref}
\usepackage[usenames,dvipsnames]{color}
\usepackage{tcolorbox}
\usepackage[normalem]{ulem}
\usepackage{stmaryrd} 
\usepackage[refpage,noprefix,intoc]{nomencl} 

\providecommand{\R}{}

\providecommand{\N}{}

\renewcommand{\R}{\mathbb{R}}

\renewcommand{\N}{{\mathbb N}}



\newcommand{\p}[1]{{\mathbf P}\left\{#1\right\}}

\newcommand{\I}[1]{{\mathbf 1}_{[#1]}}




\newcommand\cS{{\mathcal S}}
\newcommand\cT{{\mathcal T}}




\newcommand{\eqdist}{\ensuremath{\stackrel{\mathrm{d}}{=}}}
\newcommand{\convdist}{\ensuremath{\stackrel{\mathrm{d}}{\rightarrow}}}


\providecommand{\ora}[1]{}
\renewcommand{\ora}[1]{\overrightarrow{#1}}
\DeclareRobustCommand{\SkipTocEntry}[5]{} 

\reversemarginpar
\marginparwidth 1.1in

\definecolor{clou}{rgb}{0.8,0.25,0.5125}


\newtheorem{thm}{Theorem}

\newtheorem{prop}[thm]{Proposition}

\numberwithin{equation}{section}
\numberwithin{thm}{section}

\graphicspath{ {./images/} }
\usepackage{wrapfig}

   \setcounter{secnumdepth}{1}

\newcommand{\trees}{\mathscr{T}}
\newcommand{\sequence}[1]{\mathrm{#1}}

\newcommand{\dist}{\ensuremath{\mathrm{dist}}}
\newcommand{\rt}{\rho}
\newcommand{\forest}{\mathscr{F}}

\newcommand{\tree}{\mathrm{t}}
\newcommand{\randomtree}{\mathrm{T}}

\newcommand{\hyt}{\ensuremath{\mathrm{ht}}}

\begin{document}
	
\title{The Foata--Fuchs proof of Cayley's formula, and its probabilistic uses} 
\author[L. Addario-Berry]{Louigi Addario-Berry}
\address{Department of Mathematics and Statistics, McGill University, Montr\'eal, Canada}
\author[S. Donderwinkel]{Serte Donderwinkel}
\author[M. Maazoun]{Micka\"el Maazoun}
\author[J.B. Martin]{James B.\ Martin}
\address{Department of Statistics, Oxford, UK}

\email{louigi.addario@mcgill.ca}
\email{serte.donderwinkel@st-hughs.ox.ac.uk}
\email{mickael.maazoun@stats.ox.ac.uk}
\email{martin@stats.ox.ac.uk}

\date{November 17, 2022} 

\keywords{Cayley's formula, random trees}
\subjclass[2010]{05C05,60C05} 

\begin{abstract} 
We present a very simple bijective proof of Cayley's formula due to Foata and Fuchs (1970). 
This bijection turns out to be very useful when seen through a probabilistic lens; we explain some of the ways in which it can be used to derive probabilistic identities, bounds, and growth procedures for random trees with given degrees, including random $d$-ary trees.  We also introduce a partial order on the degree sequences of rooted trees, and conjecture that it induces a stochastic partial order on heights of random rooted trees with given degrees.
\end{abstract}

\maketitle
	\section{Introduction}\label{sec:intro} 
	
	A rooted tree is a triple $\tree=(V,E,\rt)$ where $(V,E)$ is a tree and $\rt \in V$. We write 
	\[
	\trees_n = \{\tree=(V,E,\rt): \tree\mbox{ is a rooted tree with }V=[n]\} 
	\]
	for the set of rooted trees with vertex set $[n]:=\{1,\ldots,n\}$. 
	{\em Cayley's formula} (which to the best of current knowledge was first established by \citet{MR1579119}) provides a very simple formula for $|\trees_n|$. 
	\begin{thm}[Cayley's formula]\label{thm:cayley}
		$|\trees_n| = n^{n-1}$. 
	\end{thm}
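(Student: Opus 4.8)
The plan is to prove Cayley's formula by exhibiting an explicit bijection between $\trees_n$ and the set of functions (equivalently, sequences) $[n]^{n-1}$, which visibly has $n^{n-1}$ elements; since the paper is explicitly about the Foata--Fuchs bijection, I will aim at reconstructing that particular map. The idea is to encode a rooted tree $\tree=(V,E,\rt)\in\trees_n$ by listing, in a carefully chosen order, the parents (or some related vertex data) of each non-root vertex, producing a word of length $n-1$ over the alphabet $[n]$; the subtlety, which distinguishes Foata--Fuchs from the older Pr\"ufer code, is exactly which order one processes the vertices in, so that the encoding can be inverted.

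Concretely, here is the approach I would take. Given $\tree\in\trees_n$, orient every edge towards the root, so each $v\neq\rt$ has a well-defined parent $\parent(v)$. First I would process the path from the root: consider the vertices on the unique path from $\rt$ to the largest leaf (or order the "records" in a prescribed way), and more generally I would set up a deterministic rule that, using the \emph{labels} in $[n]$ to break ties, lists the vertices $v_1,\dots,v_{n-1}$ (all non-root vertices) in an order that can be recovered from the output sequence alone. The encoded word is then $\bigl(\parent(v_1),\dots,\parent(v_{n-1})\bigr)\in[n]^{n-1}$. To show this is a bijection I would construct the inverse explicitly: given a word $(a_1,\dots,a_{n-1})\in[n]^{n-1}$, reconstruct the vertex order and the edges step by step — at each stage the "next vertex" $v_{i}$ is determined by the portion of the word already read together with the label rule, and we add the edge $\{v_i,a_i\}$. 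The two maps are then checked to be mutually inverse, which gives $|\trees_n|=|[n]^{n-1}|=n^{n-1}$.

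The main obstacle — and the entire content of the proof — is pinning down the ordering rule on the non-root vertices so that it is simultaneously (i) well-defined from the tree, and (ii) recoverable from the output word during the inverse procedure, without circularity. The natural choice is something like: repeatedly remove the current largest leaf, but record parents in a way tied to the evolving structure, or alternatively build the tree by a sequence of "insert vertex $v_i$ as a child of $a_i$" operations where $v_i$ is forced by $a_1,\dots,a_i$ and the set of labels. Verifying that this forcing is consistent — i.e.\ that the candidate inverse always lands in $\trees_n$ and that composing the two maps in either order is the identity — is where all the care goes; once the ordering rule is correctly specified, both directions are short inductions on $n$ (or on the number of vertices processed). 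I would finally remark that bijectivity automatically yields $|\trees_n| = n^{n-1}$, completing the proof of Theorem~\ref{thm:cayley}.
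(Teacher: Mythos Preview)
Your proposal is not yet a proof: you correctly identify the target (a bijection $\trees_n\leftrightarrow[n]^{n-1}$) and you correctly name the crux (an ordering rule that is well-defined from the tree and recoverable from the word), but you then list several incompatible candidate rules --- path to the ``largest leaf'', unspecified ``records'', ``repeatedly remove the current largest leaf'' --- without committing to or verifying any of them. The last of these is simply the Pr\"ufer code, which is a different bijection from Foata--Fuchs; the first is a fragment of the right idea but stops after one path. As written, nothing in the proposal would let a reader reconstruct either direction of the map, so the bijectivity claim is unsupported.

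What the paper actually does is make the rule completely explicit, and the inverse is where the real idea lies. Forward direction: list the leaves $\ell_1<\cdots<\ell_k$ in increasing label order; starting from $S_0=\{\rt\}$, for $i=1,\ldots,k$ take the path $P_i$ in $\tree$ from the already-explored set $S_{i-1}$ to $\ell_i$, set $S_i=S_{i-1}\cup P_i$, and record $P_i^*$, the path with its terminal leaf deleted. The word $\sequence{v}(\tree)$ is the concatenation $P_1^*P_2^*\cdots P_k^*$, which has length $n-1$ because the edge-sets of the $P_i$ partition the edges of $\tree$. Inverse direction: given $\sequence{v}\in[n]^{n-1}$, the positions $j(1)<\cdots<j(k-1)$ of \emph{repeats} (indices $j$ with $v_j\in\{v_1,\ldots,v_{j-1}\}$) cut $\sequence{v}$ into the blocks $P_1^*,\ldots,P_k^*$; the labels not occurring in $\sequence{v}$, listed increasingly, are the leaves $\ell_1<\cdots<\ell_k$; and reattaching $\ell_i$ to the end of the $i$-th block recovers the paths and hence the tree. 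The check that these are mutual inverses is then a direct unwinding, not an induction.

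The step your sketch is missing is exactly this repeat-detection mechanism: it is what makes the word self-decoding without any side information about which vertices are leaves or in what order they were processed. Your ``list the parents of non-root vertices in some order'' framing is compatible with the final answer (each entry of $\sequence{v}(\tree)$ is indeed the parent of the next-encountered vertex), but absent the specific line-breaking rule and its repeat-based inverse you have not established bijectivity.
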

	There are numerous proofs of Cayley's formula in the literature. The 1967 survey by \citet{MR0214515} presents ten such proofs, including the proof via so-called {\em Pr\"ufer codes}, which is probably the one most frequently presented in undergraduate texts. More recent proofs include those discovered by \citet[Section 2.2]{MR633783}, which considers doubly-rooted trees; by \citet{MR1673928}, which analyzes a coalescent process for building rooted labeled trees; and by \citet[Section 3.6]{MR3617364}, which uses the connection between random labeled trees and conditioned Poisson branching processes. 
	
	In this work, we present a proof of Cayley's formula due to Foata and Fuchs \cite{FoataFuchs}, via a so-called {\em line-breaking} construction. We believe this is the simplest proof of Cayley's formula yet discovered, but it is not well-known. (In fact, in the first version of this manuscript we believed it to be new; we thank Adrien Segovia for pointing out the work \cite{FoataFuchs} to us.) 
	The reason it has been overlooked may be because the focus of the paper \cite{FoataFuchs} is on a bijection between the set of sequences $\{(x_1,\ldots,x_n): x_i \in [n],1 \le i \le n\}$ and the set of functions $f:[n]\to[n]$. The Foata--Fuchs bijection map yields a bijection between sequences with $x_1=x_2$ to $\cT_n$: given such a sequence $(x_1,\ldots,x_n)$, if 
	$f:[n] \to [n]$ is the resulting function, then the corresponding tree in $\trees_n$ has root $x_1$ and edge set $\{(i,f(i)),i \in[n]\setminus\{x_1\}\}$. For an English-language presentation of the full Foata--Fuchs bijection, see \cite{Broder}.
	
	We discuss related constructions for unrooted trees, rooted trees with marked vertices and rooted forests in  Section~\ref{sec:relatedconstr}.  The bijection is extremely useful for the analysis of random combinatorial trees; Section~\ref{sec:random} of this paper discusses some of its probabilistic implications. In that section, we also define an algorithm for growing random rooted trees with given vertex degrees. When applied to regular trees, the algorithm yields a sequence $(\mathrm{T}_m,m \ge 1)$, where for each $m \ge 1$, $\mathrm{T}_m$ is a uniformly random rooted, leaf-labeled $d$-ary tree with $m$ internal nodes, and $\mathrm{T}_{m+1}$ is generated from $\mathrm{T}_m$ by local regrafting. 
	Our algorithm is somewhat similar in spirit to R\'emy's algorithm \cite{remy}, which generates a sequence of uniform binary leaf-labeled trees.
	
	Before giving the proof of Cayley's formula, we introduce some terminology. 
	Suppose $\tree$ is a tree, $S$ is a connected subset of its vertices, and
	$x$ is a vertex.
	The \textit{path from $S$ to $x$ in $\tree$} is the unique path in $\tree$ which
	starts at a vertex of $S$, does not visit any other vertex of $S$, and ends at
	$x$. If $P$ is a path, we may also write $P$ to denote the set of vertices of the path.
	Finally, a \textit{leaf} of $\tree$ is a non-root vertex of $\tree$ with degree $1$.
	
	\begin{figure}%
		\begin{centering}
			\includegraphics[width=0.5\textwidth]{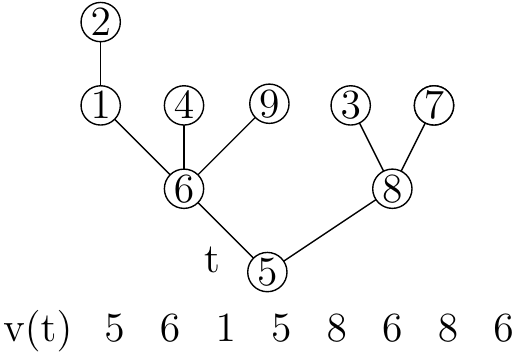}
			\caption{A tree $\tree$ and the corresponding sequence $\sequence{v}(\tree)$.}
			\label{mfig:cayley_bijection}
		\end{centering}
	\end{figure}
	
	\begin{proof}[Proof of Cayley's formula]
		The term $n^{n-1}$ counts sequences $\sequence{v}=(v_1,\ldots,v_{n-1})\in [n]^{n-1}$. We prove the theorem by introducing a bijection between $ \trees_n$ and $[n]^{n-1}$.
		\begin{tcolorbox}[title=Bijection]
			For a tree $\tree$ on $[n]$ with root $\rho$:
			\begin{itemize}
				\item Let $\ell_1<\ell_2<\dots<\ell_k$ be the leaves of $t$.
				\item
				Let $S_0=\{\rho\}$.
				\item
				Recursively, for $i=1,\dots, k$, let $P_i$ be the path
				in $\tree$ from $S_{i-1}$ to $\ell_i$, and let $S_i=S_{i-1}\cup P_i$.
				Let $P_i^*$ be $P_i$ omitting its final point.
				\item
				Let $\sequence{v}(\tree)$ be the concatenation of $P_1^*$, $P_2^*,\dots,P_k^*$.
			\end{itemize}
		\end{tcolorbox}
		We claim that $\sequence{v}(\tree)\in [n]^{n-1}$. Indeed, by definition, for each $i$, $P_i^*\subset [n]$. Moreover, note that the edge sets of $P_1$, $P_2,\dots,P_k$ form a partition of the edge set of $\tree$, and for each $i$, $|P_i^*|$ equals the number of edges in $P_i$. This implies that the length of $\sequence{v}(\tree)$ is equal to the number of edges in $\tree$, which equals $n-1$.
		
		To show that the above construction is a bijection, we describe its inverse.
		For a sequence $\sequence{v}=v_1, v_2, \dots, v_m$,
		we say that $j\in \{2,\dots, m\}$ is the location of a repeat of $\sequence{v}$ if 
		$v_j=v_i$ for some $i<j$.
		\begin{tcolorbox}[title=Inverse of the bijection]
			Given a sequence $\sequence{v}=(v_1, v_2, \dots, v_{n-1})\in [n]^{n-1}$:
			\begin{itemize}
				\item Let $j(0)=1$, let $j(1)<j(2)<\dots<j(k-1)$ be the locations of the repeats
				of the sequence $\sequence{v}$, and let $j(k)=n$.
				\item Let $\ell_1<\ell_2<\dots<\ell_k$ be the elements of $[n]$ not occurring
				in $\sequence{v}$.
				\item For $i=1,\dots, k$, let $P_i$ be the path
				$(v_{j(i-1)}, \dots, v_{j(i)-1}, \ell_i)$ with $j(i) - j(i-1)$ edges.
				\item Let $\tree(\sequence{v})$ be the graph with vertex set $[n]$, with root $v_1$
				and with edge set given by the union of the edges of the paths
				$P_1, P_2, \dots, P_k$.
			\end{itemize}
		\end{tcolorbox}
		We claim that $\tree(\sequence{v})$ is a rooted tree with vertex set $[n]$. 
		By construction, 
		for $i=2,\dots, k$, the set of vertices $\cup_{m<i} P_m$ and the path $P_i$
		intersect at a single point (namely $v_{j_{i-1}}$, the first point of $P_i$).
		Hence, by induction, for each $i$ the union of the edges of the paths
		$P_1, \dots, P_i$ is a tree, and in particular taking $i=k$ we have that $\tree(\sequence{v})$ 
		is a tree. 
		Since for $i=1,\dots, k$, the final point of $P_i$ is $\ell_i$, and since no element $\ell_i$ 
		appears as the first point of any other path $P_m$, we have that the leaves 
		of $\tree(\sequence{v})$ are 
		$\ell_1,\ldots, \ell_k$.
		
		We will argue that if $\sequence{v}\in [n]^{n-1}$, then $\sequence{v}(\tree(\sequence{v}))= \sequence{v}$. Let $k$, $P_1,\dots, P_k$, $\ell_1,\dots, \ell_k$, and  $v_1,v_{j_1},\dots, v_{j_k}$ be defined as in the inverse of the bijection applied to $\sequence{v}$.  By definition, $v_1$ is the root in $\tree(\sequence{v})$. 
		Note that $P_1$ is a path contained in $\tree(\sequence{v})$ with endpoints $v_1$ and $\ell_1$. The fact that  $\tree(\sequence{v})$ is a tree implies that $P_1$ is the unique path from $v_1$ to $\ell_1$ in $\tree(\sequence{v})$.
		Similarly, for $1<1\leq k$, $P_i$ is a path contained in $\tree(\sequence{v})$ with endpoints 
		$v_{j_{i-1}}$ and $\ell_i$, and with 
		$P_i\cap \cup_{m<i}P_m=\{v_{j_{i-1}}\}$, so that $P_i$ is the unique path 
		in $\tree(\sequence{v})$ from the union of $P_1,\dots,P_{i-1}$ to $\ell_i$. Hence  $\sequence{v}(\tree(\sequence{v}))$ is the concatenation of $P^*_1,\dots, P^*_k$, giving $\sequence{v}(\tree(\sequence{v}))= \sequence{v}$ as desired.
	\end{proof} 
	
	\section{Extensions and related constructions}\label{sec:relatedconstr}
	
	\subsection{Trees with given degrees.} 
	The bijection has the property that for a given tree $\tree$, in the associated sequence $\sequence{v}$ the number of times a given integer $k$ appears is precisely the number of children of the vertex with label $k$ in $\tree$.
	
	The {\em type} of a rooted tree $\tree$ is the vector $\sequence{n}=(n_c,c \ge 0)$, where $n_c$ is the number of vertices of $\tree$ with exactly $c$ children. 
	Writing $n = \sum_{c \ge 0} n_c$ and 
	\[
	\trees(\sequence{n}) = \{\tree \in \trees_n: \tree\mbox{ has type }\sequence{n}\}\, ,
	\] 
	then 
	\[
	|\trees(\sequence{n})| = {n \choose n_c,c \ge 0} \cdot \frac{(n-1)!}{\prod_{c \ge 0} (c!)^{n_c}}. 
	\]
	This identity (also found in \cite[Theorem 1.5]{MR1630413} and \cite[Corollary 3.5]{MR1676282}) follows from the specializations of the bijection to trees of a given type: the right-hand side counts the number of ways to first choose the labels of the vertices with $c$ children for each $c \ge 0$, then choose a sequence of length $n-1$ in which the label of each vertex with $c$ children appears exactly $c$ times.

	\subsection{Unrooted trees}  A simple trick allows us to directly study unrooted trees with this bijection by considering vertex $1$ as the root and removing the leading $1$ of the corresponding coding sequence.  This gives a proof of the more well-known form of Cayley's formula which states that there are $n^{n-2}$ unrooted trees of size $n$.
	
	There is another way to adapt the bijection to encode unrooted labeled trees of size $n$ by sequences in $[n]^{n-2}$. It consists in letting $P^*_1$ be the path between the first and second-lowest-labeled vertices of degree 1 (excluding both endpoints) and continuing as in the bijection for rooted trees.

	\subsection{Rooted trees with marked vertices}
	The bijection can also be modified to encode rooted labeled trees with $r\geq 1$ 
	distinguishable marks on the vertices, $(t,m_1,\ldots,m_r) \in \trees_n \times [n]^{r}$, by sequences in $[n]^{n+r-1}$. 
	The modification consists of changing the definition of $P_i^*$ in the recursive step slightly when constructing the sequence from the tree: 
	for $i=1,...,r$, $P_i^*$ is the path from $S_{i-1}$ to the $i$-th marked point (including the final point of this path), and for $i=1,\ldots,k$, $P_{r+i}^*$ is the path from $S_{r+i-1}$ to the $i$-th unmarked leaf $\ell_i$ (excluding its final point). Here the number of appearances of a vertex in the coding sequence is the sum of its number of children and its number of marks.
	
	By taking the $r\to\infty$ limit of this bijection and applying it to an i.i.d. sequence of elements of $[n]$, one recovers the construction of the so-called {\em p-trees} from \cite{MR1741774}.
	
	\subsection{Rooted forests}
	
	The bijection above also extends to forests. Given a set $S \subset [n]$, write $\forest_n^S$ for the set of forests $F$ with vertex set $[n]$ and root set $S$. Setting $s=|S|$, we describe a bijection between $\forest_n^S$ and the set 
	\[
	\{\sequence{v}=(v_1,\ldots,v_{n-s})\in [n]^{n-s}: v_1 \in S\}\, ,
	\]
	which has cardinality $sn^{n-s-1}$. The analogue of the ``inverse bijection'' is the easiest to describe, so we begin with that. To construct a sequence $(v_1,\ldots,v_{n-s})$ from $F \in \forest_n^S$, proceed just as in the above proof of Cayley's formula, but start from the $s$-vertex forest $F_0$ containing only the root vertices $S$, and at each step append the labels along the path to the smallest labeled leaf not already in the current forest (excluding the leaf itself). 
	
	Conversely, here is how to construct a forest $F \in \forest_n^S$ from a sequence $\sequence{v}=(v_1,\ldots,v_{n-s})$ with $v_1 \in S$. Say that $i$ is 
	the location of a repeat if $i > 1$ and either $v_i \in S$ or there is $1 \le j < i$ such that $v_i =v_j$. 
	Denote the locations of repeats of $\sequence{v}$ by $j(1),\dots, j(k-1)$ in increasing order, 
	and let $j(k)=n-s+1$.
	List the integers from $[n]\setminus S$ which do not appear in $\sequence{v}$ as $\ell_1,\ldots,\ell_{k}$ in increasing order.  
	
	Form a graph $F_S=F_S(\sequence{v})$ with vertices $[n]$, root set $S$, and edge set 
	\[
	\big\{v_iv_{i+1}: i \in [n-s-1], i+1 \not \in \{j(1),\ldots,j(k)\}\big\} \cup \{v_{j(i)-1}\ell_i,1 \le i \le k\}. 
	\]
	Essentially the same argument as in the proof of Cayley's formula shows that the connected components of $F_S$ are trees and that there are $s$ components of $F_S$, each containing exactly one vertex of $S$. Thus, rooting each component of $F_S$ at its unique element of $S$ turns $F_S$ into an element of $\forest_n^S$. 
	
	Since the construction is bijective, this yields the following  extension of Cayley's formula (which is in fact stated by both \citet{MR1579119} and \citet{cayley}): 
	\[
	|\forest_n^S|=|\{\sequence{v}=(v_1,\ldots,v_{n-s})\in [n]^{n-s}: v_1 \in S\}|= sn^{n-s-1}\, .
	\]
	This extension of the bijection can also be specialized to count forests with fixed types. 
	
	\subsection{Other coding sequences of trees}
	A variant of Pr\"ufer codes for rooted trees, first described by Neville \cite{MR54936}, gives another bijection between $\trees_n$ and $[n]^{n-1}$. It coincides with the original Pr\"ufer code if one sees vertex $n$ as the root and then removes the resulting trailing $n$. Deo and Micikevicius \cite{MR1887428} found that reading the rooted Pr\"ufer code in reverse leads to a simpler description of the bijection, which shares some features of the bijection presented in this paper.
	This bijection was independently discovered by Seo and Shin \cite{MR2353128} under the name \textit{Reverse-Pr\"ufer codes} to study leader vertices in trees, and also by Fleiner \cite{Fleiner}, 
	who remarked that it can be used to describe the law of the height of a uniformly sampled vertex in a random labeled tree (see Proposition \ref{prop:distance_vtx}, below). 
	
	\section{Random trees}\label{sec:random}
	The bijection above has numerous consequences for random trees. This section first explains how the bijection can be used to study typical and extreme distances in random trees, then discusses how it can be used to define growth procedures for random trees, somewhat in the spirit of R\'emy's algorithm \cite{remy} for growing uniformly random leaf-labeled binary trees. 
	
	For a finite set $\mathcal{S}$, we will write $X\in_u \cS$ to mean that $X$
	is chosen uniformly at random from the set $\cS$. 
	
	\subsection{Distances in random trees}
	For a tree $\tree=(V,E,\rt)$ and vertices $u,v \in V$, write $[u,v]=[u,v]_{\tree}$ for the unique path from $u$ to $v$ in $\tree$, and $\dist_\tree(u,v)$ for the distance from $u$ to $v$ in $t$, which equals the number of edges of $[u,v]$. Also, write $|v|$ for the graph distance from $\rt$ to $v$. 
	
	\begin{prop}\label{prop:leaf_identity}
		Let $\randomtree \in_u \trees_n$ and let $L$ be a uniformly random leaf of $\randomtree$. Also, let $\sequence{V}=(V_i,i \ge 1)$ be a sequence of independent uniformly random elements of $[n]$ and let $I = \min(i \ge 2: V_i \in \{V_1,\ldots,V_{i-1}\})$ be the index of the first repeated element of $\sequence{V}$. Then 
		$|L|+1 \eqdist \min(I,n)$. 
	\end{prop}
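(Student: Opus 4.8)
The plan is to transport the identity through the bijection $\tree\mapsto\sequence{v}(\tree)$ used to prove Cayley's formula, and then to symmetrise over the leaves of $\randomtree$. Write $\ell_1=\ell_1(\tree)$ for the smallest-labelled leaf of $\tree\in\trees_n$, as in the bijection, and for a finite sequence $w=(w_1,\dots,w_m)$ over $[n]$ let $R(w)=\min\{2\le j\le m: w_j\in\{w_1,\dots,w_{j-1}\}\}$ if $w$ has a repeat, and $R(w)=m+1$ otherwise. I would establish three facts: (a) $\min\big(R(\sequence{v}(\tree)),n\big)=|\ell_1(\tree)|+1$ for every $\tree\in\trees_n$; (b) $\min\big(R(V_1,\dots,V_{n-1}),n\big)=\min(I,n)$; and (c) $|\ell_1(\randomtree)|\eqdist|L|$. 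Granting these, since the bijection maps $\trees_n$ onto $[n]^{n-1}$ the sequence $\sequence{v}(\randomtree)$ is uniform on $[n]^{n-1}$, so $\sequence{v}(\randomtree)\eqdist(V_1,\dots,V_{n-1})$, and then $|L|+1\eqdist|\ell_1(\randomtree)|+1=\min\big(R(\sequence{v}(\randomtree)),n\big)\eqdist\min\big(R(V_1,\dots,V_{n-1}),n\big)=\min(I,n)$.

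For (a), recall $\sequence{v}(\tree)$ is the concatenation of $P_1^*,\dots,P_k^*$, where $k$ is the number of leaves of $\tree$. The entries of $P_1^*$ are the vertices of the simple path $P_1$ from $\rt$ to $\ell_1$ other than $\ell_1$, hence pairwise distinct, and $|P_1^*|$ is the number of edges of $P_1$, which equals $|\ell_1|$. If $k\ge 2$, the entry of $\sequence{v}(\tree)$ in position $|P_1^*|+1$ is the first vertex of $P_2$; this vertex lies in $S_1=P_1$, and it is not $\ell_1$, because $\ell_1$ is a leaf and so its unique neighbour lies on $P_1$, meaning $P_2$ cannot emanate from $\ell_1$. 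Thus the entry in position $|P_1^*|+1$ already occurs in $P_1^*$, so $R(\sequence{v}(\tree))=|P_1^*|+1=|\ell_1|+1\le n-1$. If $k=1$ then $\tree$ is the path from $\rt$ to $\ell_1$, so $\sequence{v}(\tree)=P_1^*$ has no repeat while $|\ell_1|+1=n$, again matching the left-hand side. For (b) one checks two cases: if $V_1,\dots,V_{n-1}$ are not pairwise distinct then the first repeat is at some position $I\le n-1$ and $\min(I,n)=I$; otherwise $I\in\{n,n+1\}$ while $R(V_1,\dots,V_{n-1})=n$, so both sides equal $n$.

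For (c) I would condition on the isomorphism class $\mathfrak{s}$ of the rooted tree $\randomtree$. Conditionally, $\randomtree$ is uniform over the labelled trees in $\trees_n$ representing $\mathfrak{s}$; equivalently, it is obtained by labelling the vertices of $\mathfrak{s}$ with a uniformly random bijection $\phi$ onto $[n]$, each labelled representative then arising with the constant multiplicity $|\mathrm{Aut}(\mathfrak{s})|$. Writing $w_1,\dots,w_k$ for the leaves of $\mathfrak{s}$, the tuple of labels $(\phi(w_1),\dots,\phi(w_k))$ is exchangeable, so the index of the leaf receiving the smallest leaf-label is uniform on $\{1,\dots,k\}$; this is also the conditional law of the leaf of $\mathfrak{s}$ corresponding to $L$. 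Since the root-distance of a leaf is a function of $\mathfrak{s}$ and of which leaf is chosen, $|\ell_1(\randomtree)|$ and $|L|$ are equal in law conditionally on $\mathfrak{s}$, hence equal in law.

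I expect the work to be entirely in the bookkeeping: keeping track of the degenerate case $k=1$ and of the cap at $n$ in (a)--(b), and, in (c), recording carefully that conditioning on the tree shape produces the uniform-random-labelling description. I do not anticipate a genuine obstacle.
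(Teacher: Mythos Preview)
Your proof is correct and follows essentially the same route as the paper: use the Foata--Fuchs bijection to identify $|\ell_1(\randomtree)|+1$ with the first-repeat index $j(1)=\min(I,n)$ of the coding sequence, and then use label-exchangeability to pass from the smallest-labelled leaf $\ell_1$ to a uniformly random leaf $L$. Your write-up is more explicit about the edge case $k=1$ and about why $|\ell_1(\randomtree)|\eqdist|L|$ (the paper simply invokes invariance of the uniform law under permuting leaf labels), but the argument is the same.
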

	\begin{proof}
		Write $\sequence{V}=(V_1,\ldots,V_{n-1}) \in_u [n]^{n-1}$. Then $\randomtree=\tree(\sequence{V}) \in_u \trees_n$. Moreover, recalling that the repeated entries of $(V_1,\ldots,V_{n-1})$ are $j(1),\ldots,j(k-1)$ and that $j(k)=n$, we have $\min(I,n)=j(1)$. The first leaf $\ell_1(\randomtree)$ is a child of $V_{i(1)-1}$, so 
		\[
		|\ell_1(\randomtree)|=|V_{j(1)-1}|+1 = j(1)-1=\min(I,n)-1.
		\]
		But since $\randomtree$ is a uniformly random tree, randomly permuting its leaf labels does not change its distribution, so $|\ell_1(\randomtree)|$ has the same distribution as $|L|$ for $L$ a uniformly random leaf of $\randomtree$. 
	\end{proof}
	
	One can also use the version of the bijection for rooted trees with one marked vertex to show the following similar result, whose proof is left to the reader.
	\begin{prop}
		Let $\randomtree \in_u \trees_n$ and let $U \in_u [n]$ be independent of $\randomtree$.  
		Then $|U| \eqdist I-2$, where $I$ is as in Proposition~\ref{prop:leaf_identity}. 
		\label{prop:distance_vtx}
	\end{prop}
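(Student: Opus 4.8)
The plan is to repeat the argument of Proposition~\ref{prop:leaf_identity} almost verbatim, but using the version of the bijection for rooted trees carrying a single mark ($r=1$) from Section~\ref{sec:relatedconstr}. That bijection identifies $\trees_n\times[n]$ with $[n]^n$, so if $\sequence{V}=(V_1,\ldots,V_n)\in_u[n]^n$ and $(\randomtree,M)\in\trees_n\times[n]$ is the tree-with-mark it encodes, then $(\randomtree,M)$ is uniform on $\trees_n\times[n]$; in particular $\randomtree\in_u\trees_n$ and $M$ is a uniform element of $[n]$ independent of $\randomtree$, whence $|M|\eqdist|U|$. It therefore suffices to show that $|M|=I-2$, reading $I$ off from $\sequence{V}$ (padded, if necessary, by further i.i.d.\ uniform elements of $[n]$ so as to make sense of the $\min$ in the definition of $I$).

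First I would identify the leading block of the code. By the bijection, $\sequence{V}$ is the concatenation of the blocks produced from $(\randomtree,M)$, the first of which, $P_1^*$, is the path in $\randomtree$ from the root to $M$ \emph{including} $M$, while the remaining blocks are indexed by the unmarked leaves of $\randomtree$ (and, as recalled in Section~\ref{sec:relatedconstr}, the unmarked leaves are exactly the elements of $[n]$ that do not appear in $\sequence{V}$). The entries of $P_1^*$ are pairwise distinct, being the vertices of a simple path, and there are $|M|+1$ of them. If $\randomtree$ has at least one unmarked leaf, then the entry of $\sequence{V}$ immediately after $P_1^*$ is the initial vertex of the next block $P_2^*$, which lies in $S_1$, i.e.\ among the entries of $P_1^*$; hence it is the first repeated entry of $\sequence{V}$, occurring at position $|M|+2$, so $|M|=j-2$ where $j$ is the location of the first repeat.

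The one case needing separate treatment is when $\randomtree$ has no unmarked leaf, equivalently when $\sequence{V}$ has no repeat, equivalently (length $n$, values in $[n]$) when $\sequence{V}$ is a permutation of $[n]$; combinatorially $\randomtree$ is then a path rooted at one end with $M$ the other end, so $|M|=n-1$. Here $\sequence{V}=P_1^*$ and there is no later block, but the pigeonhole principle applied to the padded sequence forces its first repeat at position $n+1$, so $I=n+1$ and once more $|M|=n-1=I-2$. In all other cases the first repeat of $\sequence{V}$ is at a position $\le n$, which is exactly $I$, so $|M|=I-2$ there as well. Combining with the first paragraph gives $|U|\eqdist|M|=I-2$.

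The substance of the argument, and the only place I expect to have to be careful, is the combinatorial bookkeeping for the $r=1$ bijection: checking that its inverse delimits the leading block $P_1^*$ precisely at the first repeat and recovers the root-to-mark path together with $M$, and then folding the degenerate permutation case into the general statement. The probabilistic step --- transporting uniformity from $[n]^n$ to $\trees_n\times[n]$ and identifying $|M|$ with $|U|$ in distribution --- is immediate from the bijection.
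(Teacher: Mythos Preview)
Your proposal is correct and follows precisely the approach the paper suggests (the paper leaves the proof to the reader, indicating only that one should use the $r=1$ marked-vertex bijection from Section~\ref{sec:relatedconstr}). Your handling of the degenerate permutation case via padding and pigeonhole is exactly what is needed to justify the absence of a $\min(\cdot,n)$ in the statement, in contrast to Proposition~\ref{prop:leaf_identity}.
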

	These propositions in particular imply that if $D_n$ is the distance from the root to a uniformly random vertex (or leaf) in $\randomtree_n \in_u \trees_n$, then $n^{-1/2} D_n \convdist R$, where $R$ is Rayleigh distributed: $\p{R \ge x} = e^{-(x^2/2)\I{x \ge 0}}$. 
	It is not hard to build on these statements in order to precisely characterize the joint distribution of the lengths of the branches to the $k$ smallest labeled leaves (using the bijection) or to $k$ uniform vertices (using its extension to trees with $k$ marked vertices). One thereby recovers the asymptotic {\em line-breaking construction} of uniformly random trees, proposed by Aldous \cite{aldous91continuum1}. 
	
	These distributional identities also make it possible to compare the distributions of typical distances in different trees. Define a partial order $\prec$ on type sequences $\sequence{n}=(n_c,c \ge 0)$ with $\sum_{c \ge 0} n_c = n$
	and $\sum_{c\geq 0} c n_c=n-1$
	by the following covering\footnote{
		For a partially ordered set $(\mathcal{P},\prec)$, $y \in\mathcal{P}$ covers $x \in \mathcal{P}$ if $x \prec y$ and for all $z \in \mathcal{P}$, if $x \preceq z \preceq y$ then $z \in \{x,y\}$. 
	}
	relation: $\sequence{m}=(m_c,c \ge 0)$ covers $(n_c,c \ge 0)$ if there are positive integers $a,b$ such that $m_a=n_a+1, m_b=n_b+1$ and $m_0=n_0-1, m_{a+b}=n_{a+b}-1$. (In words, to obtain $\sequence{m}$ from $\sequence{n}$ we replace one vertex with $a+b$ children by two vertices, one with $a$ children and one with $b$ children, and reduce the number of leaves accordingly; then $\sequence{n} \prec \sequence{m}$.)
	
	Given type $\sequence{n}$, $\sequence{m}$ related as in the previous paragraph, we may couple the constructions of random trees with types $\sequence{n}$ and $\sequence{m}$ as follows. Let $\sequence{V}_{\sequence{n}}=(V_1,\ldots,V_{n-1})$ be a uniformly random sequence of elements of $[n]$
	subject to the constraint that for each $c\geq 0$, the number of values from $[n]$
	which occur precisely $c$ times in the sequence is $n_c$.
	Conditionally on $\sequence{V}_{\sequence{n}}$, choose $X \in [n]$ uniformly at random from among those integers in $[n]$ which appear exactly $a+b$ times in $\sequence{V}_{\sequence{n}}$, and independently choose $Y$ uniformly at random from among those integers in $[n]$ which do not appear in $\sequence{V}_{\sequence{n}}$. Choose $a$ of the instances where $X$ appears in $\sequence{V}_{\sequence{n}}$, uniformly at random, and replace each them by the integer $Y$; call the resulting sequence $\sequence{V}_{\sequence{m}}$. Then the trees $\randomtree_{\sequence{n}}$ and $\randomtree_{\sequence{m}}$ corresponding to $\sequence{V}_{\sequence{n}}$ and $\sequence{V}_{\sequence{m}}$ under the bijection are uniformly random elements of $\trees(\sequence{n})$ and $\trees(\sequence{m})$, respectively. Moreover, the index of the first repeated element of $\sequence{V}_{\sequence{n}}$ is at most that of the first repeated element of $\sequence{V}_{\sequence{m}}$, so the distance from the root to the smallest labeled leaf in $\randomtree_{\sequence{n}}$ is at most the corresponding distance in $\randomtree_{\sequence{m}}$. 
	
	It follows from this coupling that for any types $\sequence{n}$ and $\sequence{m}$ with $\sequence{n} \preceq \sequence{m}$, if $\randomtree_{\sequence{n}} \in_u \trees(\sequence{n})$ and $\randomtree_{\sequence{m}} \in_u \trees(\sequence{m})$, and $L_{\sequence{n}}$ and $L_{\sequence{m}}$ are uniformly random leaves of $\randomtree_{\sequence{n}}$ and $\randomtree_{\sequence{m}}$, respectively, then $|L_{\sequence{n}}|$ is stochastically dominated by $|L_{\sequence{m}}|$, by which we mean that $\p{|L_{\sequence{n}}| \le t} \ge \p{|L_{\sequence{m}}| \le t}$ for all $t \in \R$; we denote this relation by 
	$|L_{\sequence{n}}| \preceq_{\mathrm{st}} |L_{\sequence{m}}|$.
	We conjecture that this stochastic relation also holds for the heights of the trees: writing $\hyt(\tree):=\max(|v|,v \mbox{ is a vertex of }\tree)$, then
	\[
	\hyt(\randomtree_{\sequence{n}}) \preceq_{\mathrm{st}} \hyt(\randomtree_{\sequence{m}}),
	\]
	whenever $\randomtree_{\sequence{n}} \in_u \trees(\sequence{n})$ and $\randomtree_{\sequence{m}} \in_u \trees(\sequence{m})$ and $\sequence{n} \preceq \sequence{m}$. 
	
	\smallskip
	
	Some of the most valuable probabilistic consequences of the bijection arise when studying distances in trees with given degrees, using the variant of the bijection that we introduce at the start of Section~\ref{sec:relatedconstr}. 
	In particular, in recent work, \citet{blanc} proves convergence toward inhomogeneous continuum random trees for random combinatorial trees with given types, in great generality, resolving a conjecture from \cite{AldousMiermontPitman} on Lévy trees being a `mixture' of inhomogeneous continuum random trees; he also proves exponential upper tail bounds for the heights of such random trees, which are tight up to constant factors in many cases. In other recent work, \citet{us} use the bijection in order to prove several conjectures from \cite{addarioberryfattrees}, \cite{MR3077536},  \cite{MR2908619} and \cite{McDiarmidScott} regarding the asymptotic behaviour of the height of random trees and tree-like structures; the work \cite{us}  in particular shows that {\em all} random combinatorial trees with $n$ nodes have typical height $O(n^{1/2})$, unless they are extremely ``path-like'', possessing $n-o(n)$ nodes with exactly one child.
	
	The bijection can also be adapted to study the structure of random graphs which are not trees (see \cite[Chapter 1]{abnotes}), and we expect that it will prove useful in proving convergence results in such settings as well; a first example is found in \cite{blancgraphs}. 
	
	\subsection{Growth procedures for rooted trees with a given degree sequence}
	
	In this section, we present a growth procedure for rooted trees with a given degree sequence --- and in particular for $d$-ary trees --- which is built from a version of the inverse bijection 
	presented in the proof of Theorem~\ref{thm:cayley}. To the best of our knowledge, this is the first growth procedure for uniformly random trees with a given degree sequence. Other growth procedures for uniform $d$-ary trees have appeared in the literature, but we believe that the procedure obtained by specializing our general method to $d$-ary trees is the simplest one known so far. We briefly describe the two other growth procedures for uniform $d$-ary trees that we are aware of at the end of this section. 
	
	Fix an abstract set $\{\ell_i,i \ge 1\}$ of leaf labels. 
	Let $\mathbf{d}=(d_1,\dots,d_m)$ be a sequence of positive integers, and let  
	$L_{\mathbf d}=1+\sum_{i=1}^m (d_i-1)\geq 1$. 
	Let $\cT_\mathbf{d}$ denote the set of rooted trees $\tree$ with vertex set $[m]\cup \{\ell_1,\dots, \ell_{L_{\mathbf d}}\}$ such that for each $i\in [m]$, $i$ has $d_i$ children in $\tree$ (and $\ell_1,\ldots,\ell_{L_{\mathbf d}}$ are leaves of $\tree$).  Also, write 
	\[
	\cS_{\mathbf{d}}:=\left\{(v_1,\dots, v_{m + L_{\mathbf d} - 1}):|\{k:v_k=i\}|=d_i\text{ for all }i\in [m]\right\}
	\]
	Then the following modification of the inverse bijection from Theorem~\ref{thm:cayley} gives a bijection between $\cS_{\mathbf d}$ and $\cT_{\mathbf d}$. 
	Fix $\sequence{v}=(v_1,\dots, v_{m + L_{\mathbf d} - 1}) \in \cS_{\mathbf d}$. 
	\begin{tcolorbox}[title=Bijection between $\cS_{\mathbf d}$ and $\cT_{\mathbf d}$.]
		\begin{itemize}
			\item Let $j(0)=1$, let $j(1)<j(2)<\dots<j(L_{\mathbf d}-1)$ be the locations of the repeats
			of the sequence $\sequence{v}$, and let $j(L_{\mathbf d})=m+L_{\mathbf d}$.
			\item For $i=1,\dots, L_{\mathbf d}$, let $P_i$ be the path
			$(v_{j(i-1)}, \dots, v_{j(i)-1}, \ell_i)$.
			\item Let $\tree(\sequence{v}) \in \cT_{\mathbf d}$ have root $v_1$ and edge set given by the union of the edges of the paths
			$P_1, P_2, \dots, P_{L_{\mathrm d}}$.
		\end{itemize}
	\end{tcolorbox}
	In the above bijection, the path $P_i$ ends with the leaf $\ell_i$. However, any other fixed ordering of $\ell_1,\ldots,\ell_{L_{\mathbf d}}$ would also yield a bijective correspondence. In fact, this is even true if the choice of leaf ordering depends on the sequence $\sequence{v}$, provided that the leaf-ordering rule still has the property that different sequences result in different trees. We exploit this flexibility below, in order to design a simple growth procedure. 
	
	Write $\preceq$ for the total ordering of the vertices of $\tree(\sequence{v})$ which is the order the vertices first appear along the paths $P_1,\ldots,P_{L_{\mathrm d}}$. 
	
	For $d\in\N$ and $\mathbf{d}':=(d_1,\dots,d_m,d)$, note that we can construct an element of $\cS_{\mathbf{d}'}$  starting from an element $S_\mathbf{d} \in \cS_{\mathbf{d}}$ by inserting $d$ repeats of integer $m+1$. (Later, we also write $\mathbf{d}'=(d_1,\ldots,d_m,d_{m+1})$, so $d_{m+1}=d$.) This corresponds to choosing $S_\mathbf{d} \in \cS_{\mathbf{d}}$ and a multiset of size $d$ with elements in $[m + L_{\mathbf d}]$, or equivalently, $T_{\mathbf d} \in \cT_{\mathbf d}$ and a multiset of $d$ vertices of $T_{\mathbf d}$, where we use the convention that $i\in [m+ L_{\mathbf d}]$ corresponds to the $i^{th}$ vertex in the total ordering $\preceq$ of the vertices of $T_{\mathbf d}$ defined above. 
	The following growth procedure provides a way to sample a uniform random element of $\cT_\mathbf{d'}$ given $\randomtree_{\mathbf d} \in_u \cT_{\mathbf{d}}$ and an independent uniform random multiset of size $d$ of vertices of $\randomtree_{\mathbf d}$. 
	It yields the same shape of tree that one would obtain starting from $\randomtree_{\mathbf d}$ by first transforming the sequence $S_d$ corresponding to $\randomtree_{\mathbf d}$ under the above bijection as just described, then applying the bijection to the resulting sequence. However, it yields a different labelling of the leaves. 
	
	\begin{tcolorbox}[title = Constructing a tree $T_{\mathbf d'}\in \cT_\mathbf{d'}$ from a tree $T_{\mathbf d}\in \cT_\mathbf{d}$ and a multiset of $d$ vertices of $T_{\mathbf d}$]
		Denote the multiset as $\{w_1,\dots,w_d\}$, where $w_1 \preceq w_2 \preceq \ldots \preceq w_d$.
		Then $T_{\mathbf{d}'}$ is obtained from $T_{\mathbf{d}}$ by adding vertices $m+1, \ell_{L_{\mathbf d}+1}, \ldots, \ell_{L_{\mathbf d}+d-1}$ and modifying the edge-set as follows.
		\begin{enumerate}
			\item If $w_1$ was the root of $T_{\mathbf d}$, connect $m+1$ to $w_1$ and re-root the tree at $m+1$. Otherwise, replace the edge $vw_1$ that connects $w_1$ to its parent $v$ with two edges $v(m+1)$ and $(m+1)w_1$.
			\item For $j = 2,\ldots,d$,
			\begin{enumerate}
				\item if $w_j = w_{j-1}$, add an edge from $m+1$ to $\ell_{L_{\mathbf d}+j-1}$, 
				\item if $w_j \neq w_{j-1}$, remove the edge from $w_j$ to its parent $v$, then add edges from $v$ to $\ell_{L_{\mathbf d}+j-1}$ and from $m+1$ to $w_j$. 
			\end{enumerate} 
		\end{enumerate}
	\end{tcolorbox}
	\begin{figure}[hbt]
		\begin{centering}\includegraphics[width=0.9\textwidth]{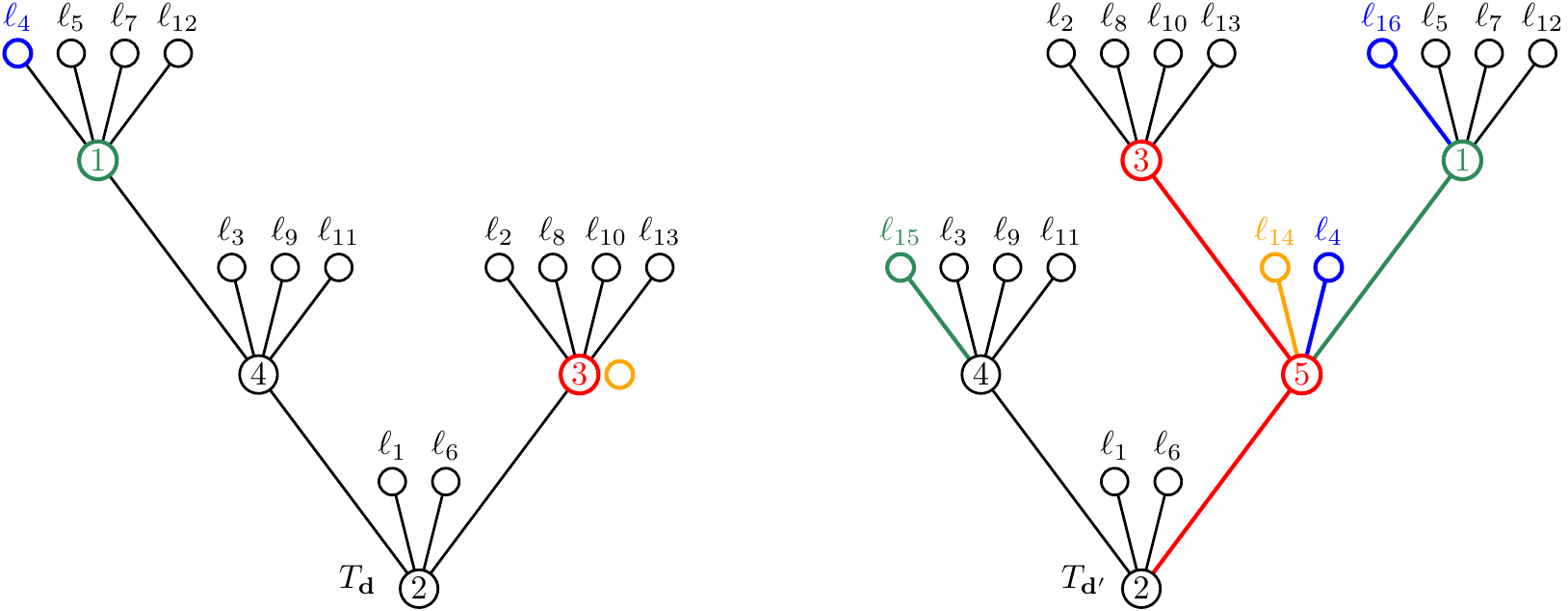}
			\caption{The growth procedure is used to add one internal vertex to the quarternary tree $T_{\mathbf{d}}$. Here, $(w_1,w_2,w_3,w_4)=(3,3,1,\ell_4)$.
				\label{fig:growth_4ary}}
		\end{centering}
	\end{figure}
	
	An example is provided in Figure \ref{fig:growth_4ary}. In the example, the coding sequence of $T_{\mathbf{d}}$ under the bijection presented above is $(2,2,3,2,4,4,1,1,2,1,3,4,3,4,1,3)$.
	The total ordering $\preceq$ of vertices of $T_{\mathbf d}$ is 
	\[
	2 \prec \ell_1 \prec 3 \prec \ell_2 \prec 4 \prec \ell_3 \prec 1 \prec \ell_4 \prec \ell_5 \prec \ell_6  \prec \ldots \prec \ell_{13}\, .
	\]
	The chosen multiset $(3,3,1,\ell_4)$ of vertices of $T_{\mathbf d}$ corresponds to the multiset $(3,3,7,8)$, of elements of $[m+L_{\mathbf{d}}]$, and thence to transforming the coding sequence into 
	\[
	(2,2,\mathbf{5},\mathbf{5},3,2,4,4,\mathbf{5},1,\mathbf{5},1,2,1,3,4,3,4,1,3).
	\]
	The new sequence gives $T_{\mathbf{d}'}$ if we modify the bijection so that the new leaves $\ell_{14},
	\ell_{15},\ell_{16}$ are inserted when the repeats of $5$ occur, and the other leaves are inserted according to their original ordering. In general, the growth procedure yields a modified bijection which reads as follows. With $\mathbf{d}'$ as above, fix $\sequence{v}'=(v_1',\dots, v'_{m+L_{\mathbf{d}'}}) \in \cS_{\mathbf{d}'}$, and note that $L_{\mathbf{d}'}=L_{\mathbf d}+d-1$. 
	
	\begin{tcolorbox}[title=Modified bijection between $\cS_{\mathbf{d}'}$ and $\cT_{\mathbf{d}'}$.]
		\begin{itemize}
			\item Let $j(0)=1$, let $j(1)<j(2)<\dots<j(L_{\mathbf{d}'}-1)$ be the locations of the repeats of the sequence $\sequence{v}'$, and let $j(L_{\mathbf{d}'})=m+1+L_{\mathbf{d}'}$.
			\item Reorder $\ell_1,\ldots,\ell_{L_{\mathbf{d}'}}$ as $\hat{\ell}_1,\ldots,\hat{\ell}_{L_{\mathbf{d}'}}$ as follows. For $1 \le i \le L_{\mathbf{d}'}-1$: 
			\begin{itemize} 
				\item If $v'_{j(i)}$ is the $(k+1)$'st appearance of $(m+1)$ then let $\hat{\ell}_i=\ell_{L_{\mathbf{d}}+k}$. 
				\item If $v'_{j(i)}$ is the $k$'th repeated entry which is not equal to $(m+1)$ then let $\hat{\ell}_i=\ell_k$. \end{itemize}
			Let $\hat{\ell}_{L_{\mathbf{d}'}} = \ell_{L_{\mathbf{d}}}$. 
			\item For $i=1,\dots, L_{\mathbf{d}'}$, let $P_i'$ be the path
			$(v_{j(i-1)}, \dots, v_{j(i)-1}, \hat{\ell}_i)$.
			\item Let $\tree'(\sequence{v}') \in \cT_{\mathbf d'}$ have root $v'_1$ and edge set given by the union of the edges of the paths
			$P'_1, P'_2, \dots, P'_{L_{\mathbf{d}'}}$.
		\end{itemize}
	\end{tcolorbox}
	We now describe the inverse of the modified bijection, which takes as input a tree 
	$\tree' \in \cT_{\mathbf{d}'}$ 
	--- i.e., a tree $\tree'$ with vertex set $[m+1]\cup \{\ell_1,\dots, \ell_{L_{\mathbf{d}'}}\}$ such that for each $i\in [m+1]$, $i$ has $d_i$ children, and $\ell_1,\ldots,\ell_{L_{\mathbf{d}'}}$ are leaves ---  and outputs a sequence $\sequence{v}'=(v'_1,\ldots,v'_{m+L_{\mathbf{d}'}}) \in \cS_{\mathbf{d}'}$. We leave it to the reader to verify that the two procedures are indeed inverses. This justifies the fact that the above growth procedure 
	takes the uniform distribution on $\cT_{\mathbf{d}}$ to the uniform distribution on $\cT_{\mathbf{d}'}$.
	
	\begin{tcolorbox}[title=Inverse of the modified bijection]
		\begin{itemize}
			\item  Let $\sequence{v}'_{m+L_{\mathbf{d}'}}$ be equal to the parent of $\ell_{L_{\mathbf{d}}}$ in $\tree'$. 
			Say that $\ell_{L_{\mathbf d}}$ is used, and that $\ell_1,\dots,\ell_{L_{\mathbf{d}}-1}$ and  $\ell_{L_{\mathbf{d}}+1},\dots,\ell_{L_{\mathbf{d}'}}$ are unused.
			\item For 
			$j=m+L_{\mathbf{d}'}-1,\ldots,2,1$: 
			\begin{itemize}
				\item If the number of occurrences of $\sequence{v}'_{j+1}$ in $\big(\sequence{v}'_{j+1},\dots, \sequence{v}'_{m+L_{\mathbf{d}'}}\big)$ is equal to the number of children of $\sequence{v}'_{j+1}$ in $\tree'$, then let $\sequence{v}'_{j}$ be equal to the parent of $\sequence{v}'_{j+1}$ in $\tree'$. 
				\item Otherwise, define $\sequence{v}'_{j}$ as follows. 
				\begin{itemize}
					\item If $\sequence{v}'_{j+1}=m+1$, let $i^*$ be the maximum $L_{\mathbf{d}}+1\leq i\leq L_{\mathbf{d}'}$ such that $\ell_i$ is unused.
					\item If $\sequence{v}'_{j+1}\ne m+1$, let $i^*$ be the maximum $1\leq i\leq L_{\mathbf{d}}-1$ such that $\ell_i$ is unused. 
				\end{itemize}
				Let $\sequence{v}'_{j}$ be equal to the parent of $\ell_{i^*}$ in $\tree'$. Say that $\ell_{i^*}$ is used.
			\end{itemize}
		\end{itemize}
	\end{tcolorbox}
	
	We conclude the paper by justifying the assertion from Section~\ref{sec:intro} that the growth procedure yields an algorithm for sampling a sequence of random $d$-ary leaf-labeled trees. For this, let $\mathbf{d}^{(m)}=(d,d,\ldots,d)$ have length $m$. Starting from the unique tree $\randomtree^{(1)} \in_u \cT_{\mathbf{d}^{(1)}}$, for each $m \ge 1$ let $\randomtree^{(m+1)}$ be constructed from $\randomtree^{(m)}$ according to the above procedure. Now let $\mathrm{T}_m$ be obtained from $\randomtree^{(m)}$ by unlabeling the non-leaf vertices; then $\mathrm{T}_m$ is a uniformly random leaf-labeled $d$-ary tree with $m$ internal nodes, and $(\mathrm{T}_m,m \ge 1)$ is the random sequence alluded to in the introduction. (In fact, given a multiset of size $d$ of vertices of $\randomtree^{(m)}$, its ordering, as defined in the construction of $\randomtree^{(m+1)}$ from  $\randomtree^{(m)}$,  does not depend on the labels of the internal vertices. Therefore, it is straightforward to define the pushforward of the growth procedure directly on leaf-labeled trees.)
	Then, given  $\mathrm{T}_{m+1}$, the non-leaf vertex that was added to obtain $\mathrm{T}_{m+1}$ from $\mathrm{T}_m$ is a uniformly random non-leaf vertex of $\mathrm{T}_{m+1}$. In the binary case, this distinguishes the procedure from R\'emy's algorithm, for which, given the shape of $\mathrm{T}_{m+1}$, the non-leaf vertex that was added is instead the parent of a uniformly random leaf of $\mathrm{T}_{m+1}$.
	
	The growth procedure for planar $d$-ary trees described in \cite{Marckert} is most comparable to ours. Like our procedure, it is based on moving subtrees in the tree, but while we sample a uniform multiset of vertices, the author of \cite{Marckert} samples a uniform set of edges (possibly including `fake' edges called \emph{buds}) and an independently chosen uniform element in $[d]$ to define a step of the growth procedure. In the procedure in \cite{Marckert}, the subtrees always get attached to a new vertex below the old root, while our attachment point is distributed as a uniformly random non-leaf vertex. 
	The growth procedure in \cite{Marckert} is arguably more complex, both to describe and to verify, but a single growth step takes constant time, so the procedure yields a linear time algorithm for growing uniform $d$-ary trees. In our procedure, the part of a growth step that we do not see how to execute in constant time is determining the order under $\preceq$ of  the elements in the selected multiset.
	
	In \cite{LuczakWinkler}, the authors show the existence of a local growth procedure for planar $d$-ary trees, in which a random leaf is replaced by a new vertex with $d$ children. However, the probabilities with which the leaves are chosen are implicitly defined, which makes the growth procedure more challenging to implement in practice.
	
	\section{Acknowledgements}
	We thank Arthur Blanc-R\'enaudie and Nicolas Broutin for useful discussions, Adrien Segovia for pointing out the reference \cite{FoataFuchs}, and two anonymous referees for useful comments. 
	During the preparation of this work MM was supported by EPSRC Fellowship EP/N004833/1.
	
	\bibliographystyle{plainnat}
	\bibliography{cayley}

\begin{thebibliography}{28}
\providecommand{\natexlab}[1]{#1}
\providecommand{\url}[1]{\texttt{#1}}
\expandafter\ifx\csname urlstyle\endcsname\relax
  \providecommand{\doi}[1]{doi: #1}\else
  \providecommand{\doi}{doi: \begingroup \urlstyle{rm}\Url}\fi

\bibitem[Addario-Berry(2019)]{addarioberryfattrees}
Louigi Addario-Berry.
\newblock Most trees are short and fat.
\newblock \emph{Probability theory and related fields}, 173\penalty0
  (1-2):\penalty0 1--26, 2019.
\newblock ISSN 0178-8051.
\newblock \doi{10.1007/s00440-018-0829-x}.
\newblock URL \url{https://doi.org/10.1007/s00440-018-0829-x}.

\bibitem[Addario-Berry(2021)]{abnotes}
Louigi Addario-Berry.
\newblock Lecture notes for the 2021 {CRM-PIMS} summer school in probability,
  2021.
\newblock URL \url{http://problab.ca/louigi/notes/ssprob2021.pdf}.

\bibitem[Addario-Berry and Donderwinkel(2022)]{us}
Louigi Addario-Berry and Serte Donderwinkel.
\newblock Random trees have height ${O}(\sqrt{n})$.
\newblock preprint arXiv:2201.11773, 2022.
\newblock URL \url{https://arxiv.org/abs/2201.11773}.

\bibitem[Addario-Berry et~al.(2013)Addario-Berry, Devroye, and
  Janson]{MR3077536}
Louigi Addario-Berry, Luc Devroye, and Svante Janson.
\newblock Sub-{G}aussian tail bounds for the width and height of conditioned
  {G}alton-{W}atson trees.
\newblock \emph{Ann. Probab.}, 41\penalty0 (2):\penalty0 1072--1087, 2013.
\newblock ISSN 0091-1798.
\newblock \doi{10.1214/12-AOP758}.
\newblock URL \url{https://doi.org/10.1214/12-AOP758}.

\bibitem[Aldous(1991)]{aldous91continuum1}
David Aldous.
\newblock The continuum random tree.~{I}.
\newblock \emph{The Annals of Probability}, 19\penalty0 (1):\penalty0 1--28,
  1991.
\newblock URL \url{https://doi.org/10.1214/aop/1176990534}.

\bibitem[Aldous et~al.(2004)Aldous, Miermont, and Pitman]{AldousMiermontPitman}
David Aldous, Grégory Miermont, and Jim Pitman.
\newblock The exploration process of inhomogeneous continuum random trees, and
  an extension of jeulin's local time identity.
\newblock \emph{Probability theory and related fields}, 129\penalty0
  (2):\penalty0 182--218, 2004.
\newblock ISSN 0178-8051.
\newblock \doi{10.1007/s00440-003-0334-7}.
\newblock URL \url{https://doi.org/10.1007/s00440-003-0334-7}.

\bibitem[Blanc-Renaudie(2021{\natexlab{a}})]{blanc}
Arthur Blanc-Renaudie.
\newblock Limit of trees with fixed degree sequence.
\newblock preprint arXiv:2110.03378, 2021{\natexlab{a}}.
\newblock URL \url{https://arxiv.org/abs/2110.03378}.

\bibitem[Blanc-Renaudie(2021{\natexlab{b}})]{blancgraphs}
Arthur Blanc-Renaudie.
\newblock Limit of connected multigraph with fixed degree sequence.
\newblock preprint arXiv:2112.07725, 2021{\natexlab{b}}.
\newblock URL \url{https://arxiv.org/abs/2112.07725}.

\bibitem[Borchardt(1860)]{MR1579119}
C.W.\ Borchardt.
\newblock Ueber eine der {I}nterpolation entsprechende {D}arstellung der
  {E}liminations-{R}esultante.
\newblock \emph{J. Reine Angew. Math.}, 57:\penalty0 111--121, 1860.
\newblock ISSN 0075-4102.
\newblock \doi{10.1515/crll.1860.57.111}.
\newblock URL \url{https://doi.org/10.1515/crll.1860.57.111}.

\bibitem[Broder(1985)]{Broder}
Andrei Broder.
\newblock Two counting problems solved via string encodings.
\newblock In Alberto Apostolico and Zvi Galil, editors, \emph{Combinatorial
  Algorithms on Words}, pages 229--240, Berlin, Heidelberg, 1985. Springer
  Berlin Heidelberg.
\newblock ISBN 978-3-642-82456-2.
\newblock \doi{10.1007/978-3-642-82456-2_15}.
\newblock URL \url{https://doi.org/10.1007/978-3-642-82456-2_15}.

\bibitem[Camarri and Pitman(2000)]{MR1741774}
Michael Camarri and Jim Pitman.
\newblock Limit distributions and random trees derived from the birthday
  problem with unequal probabilities.
\newblock \emph{Electron. J. Probab.}, 5:\penalty0 no. 2, 18, 2000.
\newblock ISSN 1083-6489.
\newblock \doi{10.1214/EJP.v5-58}.
\newblock URL \url{https://doi.org/10.1214/EJP.v5-58}.

\bibitem[Cayley(1889)]{cayley}
Arthur Cayley.
\newblock A theorem on trees.
\newblock \emph{The quarterly journal of pure and applied mathematics},
  23:\penalty0 376--378, 1889.
\newblock URL
  \url{http://resolver.sub.uni-goettingen.de/purl?PPN600494829_0023}.

\bibitem[Deo and Micikevicius(2001)]{MR1887428}
Narsingh Deo and Paulius Micikevicius.
\newblock Pr\"{u}fer-like codes for labeled trees.
\newblock In \emph{Proceedings of the {T}hirty-second {S}outheastern
  {I}nternational {C}onference on {C}ombinatorics, {G}raph {T}heory and
  {C}omputing ({B}aton {R}ouge, {LA}, 2001)}, volume 151, pages 65--73, 2001.

\bibitem[Fleiner(2005)]{Fleiner}
Tam\'as Fleiner.
\newblock On {Pr\"ufer} codes.
\newblock Technical report of Egerv{\'a}ry Research Group, TR-2005-16, 2005.
\newblock URL \url{http://www.cs.elte.hu/egres/www/tr-05-16.html}.

\bibitem[Foata and Fuchs(1970)]{FoataFuchs}
Dominique Foata and Aimé Fuchs.
\newblock Réarrangements de fonctions et dénombrement.
\newblock \emph{Journal of Combinatorial Theory}, 8\penalty0 (4):\penalty0
  361--375, 1970.
\newblock ISSN 0021-9800.
\newblock \doi{10.1016/S0021-9800(70)80031-X}.
\newblock URL \url{https://doi.org/10.1016/S0021-9800(70)80031-X}.

\bibitem[Janson(2012)]{MR2908619}
Svante Janson.
\newblock Simply generated trees, conditioned {G}alton-{W}atson trees, random
  allocations and condensation.
\newblock \emph{Probab. Surv.}, 9:\penalty0 103--252, 2012.
\newblock \doi{10.1214/11-PS188}.
\newblock URL \url{https://doi.org/10.1214/11-PS188}.

\bibitem[Joyal(1981)]{MR633783}
Andr\'{e} Joyal.
\newblock Une th\'{e}orie combinatoire des s\'{e}ries formelles.
\newblock \emph{Adv. in Math.}, 42\penalty0 (1):\penalty0 1--82, 1981.
\newblock ISSN 0001-8708.
\newblock URL \url{https://doi.org/10.1016/0001-8708(81)90052-9}.

\bibitem[Luczak and Winkler(2004)]{LuczakWinkler}
Malwina Luczak and Peter Winkler.
\newblock Building uniformly random subtrees.
\newblock \emph{Random structures \& algorithms}, 24\penalty0 (4):\penalty0
  420--443, 2004.
\newblock ISSN 1042-9832.
\newblock \doi{10.1002/rsa.20011}.
\newblock URL \url{https://doi.org/10.1002/rsa.20011}.

\bibitem[Marckert(2021)]{Marckert}
Jean-Francois Marckert.
\newblock Growing random uniform d-ary trees.
\newblock preprint arXiv:2105.05513, 2021.
\newblock URL \url{https://arxiv.org/pdf/2105.05513.pdf}.

\bibitem[McDiarmid and Scott(2016)]{McDiarmidScott}
Colin McDiarmid and Alex Scott.
\newblock Random graphs from a block-stable class.
\newblock \emph{European Journal of Combinatorics}, 58:\penalty0 96--106, 2016.
\newblock ISSN 0195-6698.
\newblock \doi{10.1016/j.ejc.2016.05.005}.
\newblock URL \url{https://doi.org/10.1016/j.ejc.2016.05.005}.

\bibitem[Moon(1967)]{MR0214515}
J.~W. Moon.
\newblock Various proofs of {C}ayley's formula for counting trees.
\newblock In \emph{A seminar on {G}raph {T}heory}, pages 70--78. Holt, Rinehart
  and Winston, New York, 1967.

\bibitem[Neville(1953)]{MR54936}
E.~H. Neville.
\newblock The codifying of tree-structure.
\newblock \emph{Proc. Cambridge Philos. Soc.}, 49:\penalty0 381--385, 1953.
\newblock ISSN 0008-1981.
\newblock \doi{10.1017/s030500410002853x}.
\newblock URL \url{https://doi.org/10.1017/s030500410002853x}.

\bibitem[Pitman(1998)]{MR1630413}
Jim Pitman.
\newblock Enumerations of trees and forests related to branching processes and
  random walks.
\newblock In \emph{Microsurveys in discrete probability ({P}rinceton, {NJ},
  1997)}, volume~41 of \emph{DIMACS Ser. Discrete Math. Theoret. Comput. Sci.},
  pages 163--180. Amer. Math. Soc., Providence, RI, 1998.

\bibitem[Pitman(1999)]{MR1673928}
Jim Pitman.
\newblock Coalescent random forests.
\newblock \emph{J. Combin. Theory Ser. A}, 85\penalty0 (2):\penalty0 165--193,
  1999.
\newblock ISSN 0097-3165.
\newblock \doi{10.1006/jcta.1998.2919}.
\newblock URL \url{https://doi.org/10.1006/jcta.1998.2919}.

\bibitem[R\'emy(1985)]{remy}
Jean-Luc R\'emy.
\newblock Un proc\'ed\'e it\'eratif de d\'enombrement d'arbres binaires et son
  application \`a leur g\'en\'eration al\'eatoire.
\newblock \emph{RAIRO - Theoretical Informatics and Applications - Informatique
  Th\'eorique et Applications}, 19\penalty0 (2):\penalty0 179--195, 1985.
\newblock URL \url{http://www.numdam.org/item/ITA_1985__19_2_179_0/}.

\bibitem[Seo and Shin(2007)]{MR2353128}
Seunghyun Seo and Heesung Shin.
\newblock A generalized enumeration of labeled trees and reverse {P}r\"{u}fer
  algorithm.
\newblock \emph{J. Combin. Theory Ser. A}, 114\penalty0 (7):\penalty0
  1357--1361, 2007.
\newblock ISSN 0097-3165.
\newblock \doi{10.1016/j.jcta.2007.01.010}.
\newblock URL \url{https://doi.org/10.1016/j.jcta.2007.01.010}.

\bibitem[Stanley(1999)]{MR1676282}
Richard~P. Stanley.
\newblock \emph{Enumerative combinatorics. {V}ol. 2}, volume~62 of
  \emph{Cambridge Studies in Advanced Mathematics}.
\newblock Cambridge University Press, Cambridge, 1999.
\newblock ISBN 0-521-56069-1; 0-521-78987-7.
\newblock \doi{10.1017/CBO9780511609589}.
\newblock URL \url{https://doi.org/10.1017/CBO9780511609589}.
\newblock With a foreword by Gian-Carlo Rota and appendix 1 by Sergey Fomin.

\bibitem[van~der Hofstad(2017)]{MR3617364}
Remco van~der Hofstad.
\newblock \emph{Random graphs and complex networks. {V}ol. 1}.
\newblock Cambridge Series in Statistical and Probabilistic Mathematics, [43].
  Cambridge University Press, Cambridge, 2017.
\newblock ISBN 978-1-107-17287-6.
\newblock \doi{10.1017/9781316779422}.
\newblock URL \url{https://doi.org/10.1017/9781316779422}.

\end{thebibliography}
	
	\appendix
	
\end{document}